\numberwithin{equation}{section}
{\theoremstyle{definition}
\newtheorem{definition}{Definition}[section]

\newtheorem{remark}[definition]{Remark}
}
\newtheorem{proposition}[definition]{Proposition}
\newtheorem{corollary}[definition]{Corollary}
\newtheorem{lemma}[definition]{Lemma}
\newtheorem{thm}[definition]{Theorem}
\renewenvironment{definition}{\begin{definition}}{\mbox{}
\end{definition}}
\newcommand{\R}{\mathbb{R}}
\def\R{{\mathbb R}}
\font\textmsbm=msbm10
\date{ }
\title{On the ultimate energy bound of solutions to some forced second order evolution equations with a general nonlinear damping operator.}
\author{Alain Haraux\vspace{1ex}\\ 
{\normalsize Sorbonne Universit\'es, UPMC Univ Paris 06 \& CNRS } \\  
{\normalsize UMR 7598, Laboratoire Jacques-Louis Lions }\\ 
{\normalsize 4, place Jussieu 75005, Paris, France}\\  
{\normalsize e-mail: \texttt{haraux@ann.jussieu.fr}}}
\begin{document}
\maketitle
\begin{abstract} Under suitable growth and coercivity conditions on the nonlinear damping operator $g$ which ensure non-resonance, we estimate  the ultimate bound of the energy of the general solution  to the equation $\ddot{u}(t) + Au(t) + g(\dot{u}(t))=h(t),\quad t\in\R^+ ,$ where $A$ is a positive selfadjoint operator on a Hilbert space $H$ and $h$ is a bounded forcing term with values in $H$. In general the bound is of the form $ C(1+ ||h||^4)$ where $||h||$ stands for the $L^\infty$ norm of $h$ with values in $H$ and the growth of $g$ does not seem to play any role.  If $g$ behaves lie a power for large values of the velocity, the ultimate bound has a quadratic growth with respect to $||h||$ and this result is optimal. If $h$ is anti periodic, we obtain a much lower growth bound and again the result is shown to be optimal even for scalar ODEs. 
 \end{abstract}

 {\small \bf AMS classification numbers:} 34A34, 34D20, 35B40, 35L10, 35L90

\bigskip {\small \bf Keywords:} 
Second order equation, nonlinear damping, energy bound, anti-periodic. 

\bigskip

\newpage
\section[Introduction et r\'esultats principaux]{Introduction.} In this paper we investigate a specific quantitative aspect of  solutions to the equation  \begin{equation*}\label{basic-eq}
\ddot{u}(t) + Au(t) + g(\dot{u}(t)) =h(t)
\end{equation*} where $V$ is a real Hilbert space, $A\in L(V, V')$ is a symmetric, positive, coercive operator, $g\in C(V, V')$ is monotone and $h$ is a forcing term. This equation has been intensively studied in the Literature when $g$ is a local damping term, covering the following topics: existence of almost periodic solutions, asymptotic behavior of the general solution, rate of decay to $0$ of the difference of two solutions in the energy space in the best cases,  cf.e.g.  \cite{AP, Prouse, Bi, B-H, H0, HDL, HR, H3, H2,  HZ, Z1}. In the more recent paper \cite{ABH^2}, a very general result generalizing the theorems of \cite{H2} on boundedness and compactness has been proved for possibly nonlocal damping terms. However when looking at the arguments of \cite{ABH^2} and \cite{H2} and trying to extract an estimate of the solutions for $t$ large,  we find immediately that the methodology is not adapted to that purpose. The present article aims at improving the situation. Actually we devise a new technique which allows to ``forget" the influence of the initial data from the very beginning of the estimates, thus dropping all unnecessary terms related to transient behavior.
The plan of the paper is as follows: In Section 2, we introduce the basic tools used in the statements and proofs of the main results. Section 3 is devoted to a very general case. Section 4 covers a still rather general case where the damping operator behaves like a power for large values of the velocity, this for instance allows to encompass any polynomial map, and we give a short list of examples in the field of PDEs  of the second order in $t$ for which our result is optimal. In section 5 we establish  two partial results when the forcing is anti-periodic, a situation which is known (cf. e.g. \cite{H4}) to prevent resonance under weaker  conditions on $g$ than the general periodic case. We obtain a better estimate which is optimal in finite dimensions, but in the infinite dimensional  we can only slightly improve the general estimate and we do not reach what one might expect to be the optimal result.  

\section{Functional framework and the initial value problem.}
In this section, we  recall the exact functional framework that shall be used in the formulation as well as in the proofs of our new results. We follow the presentation from \cite{ABH^2} at the exception of a small difference for the approximation of weak solutions.  
\subsection {Monotone operators} Let $\cal{H}$ be a real Hilbert space endowed with an inner product $(.,.)_ {\cal{H}}$. We recall that a map $\cal{A}$ defined on a subset $ \cal{D} = D(\cal{A})$ with values in $\cal{H}$ is monotone if 
$$ \forall (U, \hat{U})\in {\cal{D}} \times {\cal{D} }, \quad ({\cal{A}}U - {\cal{A}}\hat{U}, U-\hat{U})_ {\cal{H}}\ge 0.$$ In addition $\cal{A}$ is called maximal monotone if 
$$ \forall F\in {\cal{H}}, \quad  \exists U\in D(\cal{A})\quad {\cal{A}}U + U = F.$$ The following result  is well-known (cf. H. Brezis \cite{BR1}) .
\begin{proposition}\label{eveq} If $\cal{A}$ is maximal monotone, for each $T>0$, each $U_0\in D(\cal{A})$ and $F= F(t)\in W^{1,1}(0,T;{\cal{H}})$ there is a unique function 
$U \in W^{1,1}(0,T;{\cal{H}})$ with  $ U(t) \in D({\cal{A}}) )$ for almost all $t\in (0,T)$ , $U(0) = U_0$ and such that for almost all $t\in (0,T)$
\begin{equation}\label{Eveq} U'(t)+{\cal{A}}U(t) = F(t). \end{equation} In addition if  for some $\hat{U}_0\in D(\cal{A})$ and $\hat{H}\in W^{1,1}(0,T;{\cal{H}})$ we consider the solution $\hat{U} \in W^{1,1}(0,T;{\cal{H}})$ with  $ \hat{U}(t) \in D({\cal{A}}) )$ for almost all $t\in (0,T)$ , $\hat{U}(0) = \hat{U}_0$ of $$ \hat{U}'(t)+{\cal{A}}\hat{U}(t) = \hat{F}(t), $$ then the difference satisfies the inequality
$$ \forall t\in [0, T], \quad \vert U(t)- \hat{U}(t)\vert \le \quad \vert U_0- \hat{U}_0\vert + \int _0^t \vert F(s)- \hat{F}(s)\vert ds. $$
\end{proposition}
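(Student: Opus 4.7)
This is the classical existence/uniqueness theorem of K\^omura--Brezis for Cauchy problems governed by maximal monotone operators, together with its contraction estimate. The standard route I would follow is via the Yosida approximation. Define the resolvent $J_\lambda=(I+\lambda\mathcal{A})^{-1}:\mathcal{H}\to D(\mathcal{A})$ (well defined and $1$-Lipschitz thanks to maximal monotonicity) and the Yosida regularization $\mathcal{A}_\lambda=\lambda^{-1}(I-J_\lambda)$, which is everywhere defined, monotone, Lipschitz of constant $1/\lambda$, and satisfies $\mathcal{A}_\lambda U=\mathcal{A}(J_\lambda U)$ as well as $|\mathcal{A}_\lambda U|\le|\mathcal{A}U|$ for every $U\in D(\mathcal{A})$.

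For each $\lambda>0$, since $\mathcal{A}_\lambda$ is globally Lipschitz on $\mathcal{H}$ and $F\in W^{1,1}(0,T;\mathcal{H})$, the Cauchy problem
\begin{equation*}
U_\lambda'(t)+\mathcal{A}_\lambda U_\lambda(t)=F(t),\qquad U_\lambda(0)=U_0,
\end{equation*}
admits a unique solution $U_\lambda\in W^{1,1}(0,T;\mathcal{H})$ by the standard Banach-space Cauchy--Lipschitz theorem. Differentiating the equation and taking the inner product with $U_\lambda'$, one exploits the monotonicity of $\mathcal{A}_\lambda$ to control $|U_\lambda'|$ in terms of $|\mathcal{A}U_0|$ (using $|\mathcal{A}_\lambda U_0|\le|\mathcal{A}U_0|$ since $U_0\in D(\mathcal{A})$) and the BV-norm of $F$; this yields uniform-in-$\lambda$ bounds on $U_\lambda$, $U_\lambda'$, and hence on $\mathcal{A}_\lambda U_\lambda$.

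The heart of the proof, and the step I expect to be the main technical obstacle, is to upgrade these bounds to strong convergence. I would test the difference of the equations for $U_\lambda$ and $U_\mu$ against $U_\lambda-U_\mu$ and use the identity $U_\lambda=J_\lambda U_\lambda+\lambda\mathcal{A}_\lambda U_\lambda$ together with the monotonicity inequality $(\mathcal{A}_\lambda U_\lambda-\mathcal{A}_\mu U_\mu,J_\lambda U_\lambda-J_\mu U_\mu)_{\mathcal{H}}\ge 0$. This produces a Gronwall-type estimate of the form $|U_\lambda(t)-U_\mu(t)|^2\le C(\lambda+\mu)$ uniformly on $[0,T]$, so $U_\lambda\to U$ in $C([0,T];\mathcal{H})$. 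Passing to the limit then requires that $\mathcal{A}_\lambda U_\lambda$ converges weakly to some $V$ in $L^2(0,T;\mathcal{H})$ and that the pair $(U(t),V(t))$ belongs to the graph of $\mathcal{A}$ a.e.; this is achieved using the demiclosedness of the graph of a maximal monotone operator together with $J_\lambda U_\lambda-U_\lambda=-\lambda\mathcal{A}_\lambda U_\lambda\to 0$. One concludes that $U\in W^{1,1}(0,T;\mathcal{H})$, $U(t)\in D(\mathcal{A})$ a.e., and $U'+\mathcal{A}U=F$ a.e.

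The contraction estimate is then easy and simultaneously yields uniqueness: for two solutions $U,\hat U$ with data $(U_0,F)$ and $(\hat U_0,\hat F)$, subtract the equations and take the inner product with $U-\hat U$. Using $\tfrac{1}{2}\tfrac{d}{dt}|U-\hat U|^2=(U'-\hat U',U-\hat U)$ a.e.\ and the monotonicity inequality $(\mathcal{A}U-\mathcal{A}\hat U,U-\hat U)_{\mathcal{H}}\ge 0$, one obtains
\begin{equation*}
\tfrac{1}{2}\tfrac{d}{dt}|U-\hat U|^2\le (F-\hat F,U-\hat U)_{\mathcal{H}}\le |F-\hat F|\,|U-\hat U|,
\end{equation*}
which after the standard manipulation gives $\tfrac{d}{dt}|U-\hat U|\le|F-\hat F|$ a.e., and integration from $0$ to $t$ yields the announced inequality. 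Uniqueness follows by taking $F=\hat F$ and $U_0=\hat U_0$.
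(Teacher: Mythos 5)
The paper does not prove this proposition at all: it is quoted as a well-known result with a citation to Brezis's monograph on maximal monotone operators, and your outline is precisely the classical Yosida-approximation proof (uniform bounds on $U_\lambda'$ via $|\mathcal{A}_\lambda U_0|\le|\mathcal{A}U_0|$, the Cauchy estimate in $\lambda,\mu$ through $U_\lambda=J_\lambda U_\lambda+\lambda\mathcal{A}_\lambda U_\lambda$, demiclosedness of the graph to identify the limit, and the monotonicity-based contraction inequality) found in that reference. Your sketch is correct and matches the argument the paper delegates to the literature.
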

This proposition allows one to define by density, for any $U_0\in \overline{D(\cal{A})}$ and $F= F(t)\in L^{1}(0,T;{\cal{H}})$ a weak solution of \eqref{Eveq} such that $U(0) = U_0$, cf. H. Brezis \cite{BR1}.
\subsection{Functional setting}
Throughout this article we let $H$ and $V$ be two Hilbert spaces with norms respectively denoted by $\Vert.\Vert$ and $\vert.\vert$.  We assume that $V$ is densely and continuously embedded into $H$.
Identifying $H$ with its dual $H'$, we obtain $V\hookrightarrow H
=H' \hookrightarrow V'$. We denote inner products by (.,.) and duality products by $\langle \cdot ,\cdot \rangle$; the spaces in question will be specified by subscripts. The notation  $\langle f , u \rangle$ without any subscript will be used sometimes to denote $\langle f , u \rangle_{V', V}$. The duality map: $V\rightarrow V'$ will be denoted by $A$. We observe that $A$ is characterized by the property $$\forall (u,v)\in V\times V, \quad \langle Au, v \rangle_{V', V} = (u, v)_V.$$
\subsection{Weak solutions } 
We consider the dissipative evolution equation:
\begin{equation}\label{eq1}
 \ddot{u} + Au + g(\dot{u}) = h(t)
\end{equation} where $g\in C(V, V')$ satisfies 
\begin{equation}\label{mon} \forall ( v, w)\in V\times V,\quad \langle g(v)-g(w) , v-w \rangle \ge 0 .\end{equation} We consider the (generally unbounded) operator $\cal{A}$ defined on the Hilbert space ${\cal H} = V\times H$ by 
$$D({\cal{A}}) = \{ (u,v)\in V\times V,   \,\, Au+ g(v) \in H \} $$ and 
$$\forall (u,v)\in D({\cal{A}}) ,\quad  {\cal{A}} (u,v)= (-v,  Au+ g(v)). $$ 
 \begin{lemma}\label{mon-op} The operator $ {\cal{A}}$ is maximal monotone.

\end{lemma}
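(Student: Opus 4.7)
The plan is to verify the two defining properties of a maximal monotone operator separately: first the monotonicity inequality on $D(\mathcal{A})$, then the range condition $R(I+\mathcal{A})=\mathcal{H}$.

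For monotonicity, take two pairs $(u_i,v_i)\in D(\mathcal{A})$, $i=1,2$, and compute
$$(\mathcal{A}(u_1,v_1)-\mathcal{A}(u_2,v_2),(u_1,v_1)-(u_2,v_2))_{\mathcal{H}} = -(v_1-v_2,u_1-u_2)_V+(A(u_1-u_2)+g(v_1)-g(v_2),v_1-v_2)_H.$$
Since $Au+g(v)\in H$ by definition of $D(\mathcal{A})$ and $v_1-v_2\in V$, the $H$-inner product equals the duality pairing $\langle\cdot,\cdot\rangle_{V',V}$. Using $\langle A(u_1-u_2),v_1-v_2\rangle_{V',V}=(u_1-u_2,v_1-v_2)_V$ (which is the defining property of the duality map $A$), the two $V$-inner products cancel, leaving $\langle g(v_1)-g(v_2),v_1-v_2\rangle\ge 0$ by \eqref{mon}.

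For maximality, given $(F_1,F_2)\in V\times H$, I want to find $(u,v)\in D(\mathcal{A})$ solving $(I+\mathcal{A})(u,v)=(F_1,F_2)$, i.e.
$$u-v=F_1,\qquad v+Au+g(v)=F_2.$$
Eliminating $u=v+F_1$ and setting $w=v$, the problem reduces to solving in $V'$ the single equation
$$w+Aw+g(w)=F_2-AF_1=:F_3\in V'.$$
I would apply the classical surjectivity theorem for monotone coercive operators (Minty--Browder) to the operator $B:V\to V'$ defined by $Bw=w+Aw+g(w)$, where the first term uses the natural embedding $V\hookrightarrow H\hookrightarrow V'$. Hemicontinuity of $B$ follows from the assumption $g\in C(V,V')$ and continuity of $A$ and of the embedding; monotonicity follows from monotonicity of each summand (the identity $V\hookrightarrow V'$ being monotone since $\langle w,w\rangle_{V',V}=\|w\|_H^2\ge 0$); coercivity follows from $\langle Bw,w\rangle\ge|w|_V^2+\langle g(0),w\rangle\ge|w|_V^2-|g(0)|_{V'}|w|_V$, which is superlinear in $|w|_V$. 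Surjectivity then yields $w\in V$ with $Bw=F_3$.

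Once $w$ is found, set $v:=w\in V$ and $u:=w+F_1\in V$; from $w+Aw+g(w)=F_2-AF_1$ I recover $Au+g(v)=F_2-v\in H$ (since $F_2\in H$ and $v\in V\hookrightarrow H$), so $(u,v)\in D(\mathcal{A})$, and a direct check shows $(I+\mathcal{A})(u,v)=(F_1,F_2)$. The only nontrivial point in the argument is the coercivity of $B$: it is slightly subtle because $g$ is only known to be monotone (no coercivity is assumed on $g$), so coercivity must come from the $A$ term alone, and one absorbs the possibly signed contribution of $g$ using the monotone inequality $\langle g(w)-g(0),w\rangle\ge 0$. Everything else is standard manipulation of the Gelfand triple $V\hookrightarrow H\hookrightarrow V'$.
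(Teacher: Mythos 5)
Your proposal is correct and follows essentially the same route as the paper: the same direct computation for monotonicity (converting the $H$-inner product to the $V'$--$V$ duality pairing and cancelling the $V$-inner products via the defining property of $A$), and the same reduction of the range condition to the single equation $v+Av+g(v)=\psi-A\varphi$ in $V'$, solved by surjectivity of a continuous, monotone, coercive operator (the paper cites Corollary 14 of Brezis where you invoke Minty--Browder, which is the same tool). Your version merely spells out the coercivity and the verification that the resulting pair lies in $D(\mathcal{A})$, details the paper leaves implicit.
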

\begin{proof}  Let $U= (u,v)$ and $\hat{U}= (\hat{u},\hat{v})$ be two elements of $D({\cal{A}})$. We have 
$$ ({\cal{A}}U - {\cal{A}}\hat{U}, U-\hat{U})_ {\cal{H}} = - (u-\hat{u}, v-\hat{v})_V + (Au +g(v)-A\hat{u }-g(\hat{v}), v-\hat{v})_H$$
$$ - (u-\hat{u}, v-\hat{v})_V + \langle Au +g(v)-A\hat{u }-g(\hat{v}), v-\hat{v}\rangle_{V', V}$$  since $Au +g(v)\in H $ and $A\hat{u }+g(\hat{v})\in H$ while $v, \hat{v}$ are in V . This reduces to  $$ ({\cal{A}}U - {\cal{A}}\hat{U}, U-\hat{U})_ {\cal{H}} =  \langle g(v)-g(\hat{v}), v-\hat{v}\rangle_{V', V}\ge 0 $$ Hence  $ {\cal{A}}$ is monotone. To prove that $ {\cal{A}}$ is maximal monotone we are left to show that for any $F = (\varphi, \psi )\in {\cal{H}}$ the following equation 
$$ u-v=\varphi,  \quad Au +g(v) + v = \psi $$ has a solution $U= (u,v)\in D({\cal{A}})$.  This is  equivalent to finding a solution $v\in V$ of 
$$  \quad Av +g(v) + v = \psi - A\varphi \in V' $$ But now the operator ${\cal{C}}\in C(V, V')$  defined by 
$$ \forall v \in V,  \quad {\cal{C}}v = Av +g(v) + v $$ is continuous and coercive: $V \rightarrow V'. $ Therefore by  Corollary 14 p. 126 from H. Brezis \cite{BR2},  ${\cal{C}}$ is surjective. Finally $ {\cal{A}}$ is maximal monotone as claimed.\end{proof}

As a consequence of Proposition \ref{eveq}, for any $h\in L^{1}_{loc}(\mathbb{R^+}, H)$ and for each $(u_0, u_1)\in V\times H $ there is a unique weak solution $$u \in C(\R^{+},V)\cap C^{1}(\R^{+},H)$$ of \eqref{eq1}  such that $ u(0) = u_0$ and  $\dot{u}(0) = u_1$. This solution  can be recovered on each compact interval $[0, T]$ by approximating the initial data by elements of the domain, the forcing term $h$ by $C^1$ functions and passing to the limit: the limit is independent of the approximating elements so chosen. The next result shows that in fact the approximation can even be made uniform on $\mathbb{R^+}$.

\subsection{Density of strong solutions} \begin{lemma}\label{uniform-density} For any $h\in L^{2}_{loc}(\mathbb{R^+}, H)$ and for each $(u_0, u_1)\in V\times H $ and for each $\delta>0$ there exists $({w_0}, {w_1})\in D(\cal {A} )$ and ${k}\in C^{1}(\mathbb{R^+}, H)$ for which the solution $w\in  W^{1,1}_{loc}(\R^{+},V)\cap W^{2,1}_{loc}(\R^{+},H)$ of 
$$  \ddot{w} + Aw + g(\dot{w}) = k(t); \quad w(0) = w_0, \,\, \dot{w}(0) = w_1$$ satisfies
$$ \forall t\ge 0, \quad \Vert u(t)-w(t)\Vert + \vert \dot{u}(t)-\dot{w}(t)\vert \le \delta $$ and in addition $$\forall t\in \R^{+}, \quad \int _t^{t+1} \vert k(s)-h(s)\vert^2 ds \le 2\delta . $$ 
\end{lemma}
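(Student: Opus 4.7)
The plan is to construct a strong solution $w$ which stays globally close to the weak solution $u$, and then read off the two inequalities from the contraction estimate of Proposition \ref{eveq}. This splits into three ingredients: (i) approximate $(u_0, u_1)$ in $V \times H$ by $(w_0, w_1) \in D(\cA)$, which costs nothing because the density of $D(\cA)$ in $\cH$ is precisely the tool used to define weak solutions above; (ii) approximate $h$ by a carefully chosen $k \in C^{1}(\R^+, H)$; (iii) apply the contraction inequality to the strong solution of the approximate problem and pass to the limit along strong-solution approximants of $u$.

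The delicate step is (ii). The contraction inequality of Proposition \ref{eveq} gives
$$\vert U(t) - W(t) \vert_{\cH} \le \vert U_0 - W_0 \vert_{\cH} + \int_0^t \vert h(s) - k(s) \vert_H \, ds,$$
so in order to ensure $\Vert u(t) - w(t) \Vert + \vert \dot u(t) - \dot w(t) \vert \le \delta$ for \emph{all} $t \ge 0$ I really need the \emph{global} quantity $\int_0^\infty \vert h - k \vert_H \, ds$ to be finite and small, a much stronger requirement than $h \in L^{2}_{\mathrm{loc}}$ alone provides. On top of that I must secure the $C^1$ regularity of $k$ and the uniform local bound $\int_t^{t+1} \vert h - k \vert_H^2 \, ds \le 2\delta$ from the conclusion; note that the local bound does not imply the global one, so both must be arranged by hand.

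My resolution is a partition-of-unity mollification with variable widths. Fix a smooth locally finite partition of unity $\{\phi_n\}_{n \ge 0}$ on $\R^+$ subordinate to the cover $J_n = (n-2, n+2) \cap \R^+$, so that at most a bounded number of $\phi_n$ are nonzero at any point. For each $n$ pick a convolution kernel $\rho_{\epsilon_n}$ with $\epsilon_n > 0$ small enough that, writing $h_n := h \ast \rho_{\epsilon_n}$ (after extending $h$ by $0$ to the left),
$$\Vert h - h_n \Vert_{L^1(J_n; H)} \le \delta \cdot 2^{-n-3} \qquad \text{and} \qquad \Vert h - h_n \Vert_{L^2(J_n; H)}^2 \le \delta/4;$$
both inequalities can be arranged because $h_n \to h$ in $L^2(J_n; H)$, and hence in $L^1(J_n; H)$, as $\epsilon_n \to 0$. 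Define $k := \sum_n \phi_n h_n$, a locally finite sum of smooth $H$-valued functions, so $k \in C^\infty(\R^+, H)$. Writing $k - h = \sum_n \phi_n (h_n - h)$, the geometric series yields $\int_0^\infty \vert h - k \vert_H \, ds \le \delta/4$, while the weighted Cauchy--Schwarz inequality $\vert \sum_n \phi_n f_n \vert^2 \le \sum_n \phi_n \vert f_n \vert^2$ together with the bounded overlap on each interval $[t, t+1]$ gives the desired uniform local $L^2$ bound on $h - k$.

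Finally, with $(w_0, w_1) \in D(\cA)$ chosen so that $\vert (u_0 - w_0, u_1 - w_1) \vert_\cH \le \delta/4$, Proposition \ref{eveq} provides the strong solution $w$ in the required regularity class, and its contraction inequality---which remains valid with $u$ in place of the second strong solution by passing to the limit along the approximants defining the weak solution above---yields $\vert U(t) - W(t) \vert_\cH \le \delta/2$ uniformly in $t \ge 0$. The target bound $\Vert u(t) - w(t) \Vert + \vert \dot u(t) - \dot w(t) \vert \le \delta$ then follows after absorbing the harmless factor $\sqrt 2$ between the Euclidean norm on $\cH$ and the sum norm appearing in the statement. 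The main conceptual obstacle really is the simultaneous fulfilment of a global $L^1$ closeness and a $C^1$ regularity condition on $k$; once it is recognised that the local $L^2$ closeness is essentially free from any mollification, the partition-of-unity construction above dispatches everything.
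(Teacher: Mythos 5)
Your proposal is correct and follows essentially the same route as the paper: approximate the data in $D(\mathcal{A})$, build a $C^1$ forcing term $k$ whose local approximation error decays geometrically so that $\int_0^\infty \vert h-k\vert\,ds$ is small, and feed everything into the contraction inequality of Proposition \ref{eveq}. The paper merely compresses your partition-of-unity construction into the single requirement $\int_n^{n+1}\vert k-h\vert^2\,ds\le \delta\,2^{-2n-2}$, from which both the global $L^1$ smallness (via Cauchy--Schwarz on each unit interval) and the uniform local $L^2$ bound follow at once.
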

\begin{proof} It suffices to use the last result of Proposition \ref{eveq} by observing that for any $h\in L^{2}_{loc}(\mathbb{R^+}, H)$ we can find  ${k}\in C^{1}(\mathbb{R^+}, H)$ such that $$\forall n\in \mathbb{N}, \quad \int _n^{n+1} \vert k(s)-h(s)\vert ^2 ds \le \delta 2^{-2n-2}. $$ Choosing $({w_0}, {w_1})\in D(\cal {A} )$ such that $ \Vert w_0 -u_0\Vert +\vert w_1-v_1\vert  \le \delta 2^{-1}$ the result follows immediately \end{proof}

\section{A general ultimate bound.} In this section we give a quite different proof, in a slightly more general case,  of a result stated in \cite{HR}, Remark 1.2, b) p. 167. We assume that $h\in S^2 (\mathbb{R^+}, H)$ with $$ S^2(R^+, H) = \{ f\in L^2_{loc} (R^+, H),\quad \sup _{t\in R^+} \int_t^{t+1}\vert f(s)\vert^2 ds <\infty\} $$ and we set $$||h|| _{S^2 ( \mathbb{R^+}, H)} = \{\sup _{t\in R^+} \int_t^{t+1}\vert h(s)\vert^2 ds\}^{1/2} $$ In particular if $h\in L^{\infty}(\mathbb{R^+}, H)$ , then $h\in S^2 (\mathbb{R^+}, H)$ and $||h|| _{S^2 ( \mathbb{R^+}, H)}\le ||h|| _{ L^{\infty}(\mathbb{R^+}, H)}$.
\begin{thm}\label{theo0}
 Assume that  $g\in C(V, V')$ satisfies the condition \eqref{mon} and 

\begin{equation} \label{condi1}\exists \gamma>0,\,\,\exists C_{1}\geq0\quad \forall v\in V,
\quad \langle g(v), v \rangle \geq \gamma \vert v \vert ^{2} - C_{1}.\end{equation}

\begin{equation} \label{condi2}\exists K>0,\,\,\exists C_{2}\geq0\quad \forall v\in V,\quad \Vert g(v) \Vert_{V'} \leq  C_{2} + K\langle g(v), v\rangle. \end {equation}
Then any solution $u \in C(\R^{+},V)\cap C^{1}(\R^{+},H)$ of $\eqref{eq1}$ is bounded on $\R^{+}$ in the sense that $u$ has bounded range in $V$ and $\dot{u}$ has bounded range in $H$. In addition we have  for some constant $K$ depending only on $A$ and $g$
$$ \limsup_{t\rightarrow \infty} \, (\vert \dot{u}(t) \vert^2 + \Vert u(t) \Vert^2) \le K(1+ ||h||^4 _{S^2 (\mathbb{R^+}, H)})$$ 
\end{thm}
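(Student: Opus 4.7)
My plan is to work with the classical perturbed energy
\[
\Phi(t)=\tfrac{1}{2}\vert\dot u(t)\vert^2+\tfrac{1}{2}\Vert u(t)\Vert^2+\epsilon\,(u(t),\dot u(t))_H,
\]
where the small parameter $\epsilon>0$ will be calibrated at the end. Since $V\hookrightarrow H$, $\Phi$ is equivalent to $E=\tfrac{1}{2}\vert\dot u\vert^2+\tfrac{1}{2}\Vert u\Vert^2$ for $\epsilon$ small; in particular $\Vert u\Vert^2\le 4\Phi$. I use Lemma \ref{uniform-density} to reduce to a strong solution so that the multiplier computations are legitimate, then differentiate $\Phi$ along \eqref{eq1} and rearrange to
\[
\Phi'+\epsilon\Vert u\Vert^2+\langle g(\dot u),\dot u\rangle=\epsilon\vert\dot u\vert^2+(h,\dot u)+\epsilon(h,u)-\epsilon\langle g(\dot u),u\rangle.
\]

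Of the four right-hand terms, three are routine: $\epsilon\vert\dot u\vert^2$ is absorbed into $\langle g(\dot u),\dot u\rangle$ via coercivity \eqref{condi1}; $(h,\dot u)$ and $\epsilon(h,u)$ split by Young's inequality (using $V\hookrightarrow H$ for the latter) into $\langle g(\dot u),\dot u\rangle$ and $\epsilon\Vert u\Vert^2$ plus harmless contributions $C(1+\vert h(t)\vert^2)$. The delicate term is $\epsilon\langle g(\dot u),u\rangle$: condition \eqref{condi2} gives
\[
\vert\epsilon\langle g(\dot u),u\rangle\vert\le\epsilon C_2\Vert u\Vert+\epsilon K\Vert u\Vert\,\langle g(\dot u),\dot u\rangle,
\]
and there is no obvious way to absorb the second summand without an a priori sup bound on $\Vert u\Vert$. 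This coupling between $\epsilon$ and the unknown sup of $\Vert u\Vert$ is the main obstacle and dictates everything that follows.

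I close the argument by bootstrap. For each $R>0$ fix $\epsilon=\epsilon(R)\sim 1/R$ so that $\epsilon KR\le\tfrac14$ and $\epsilon/\gamma\le\tfrac14$. On the time-set $\{t:\Vert u(t)\Vert\le R\}$ the preceding estimates collapse, after using coercivity once more on the surviving $\langle g(\dot u),\dot u\rangle$ term, to
\[
\Phi'(t)+c\epsilon\,\Phi(t)\le C(1+\vert h(t)\vert^2),
\]
with $c$ universal. A stopping-time argument on $\tau_R=\inf\{t:\Vert u(t)\Vert>R\}$, combined with the bound $\int_0^t e^{-c\epsilon(t-s)}\vert h(s)\vert^2\,ds\le C\Vert h\Vert_{S^2}^2/(c\epsilon)$, gives $\Phi(t)\le\Phi(0)+CR(1+\Vert h\Vert_{S^2}^2)$ on $[0,\tau_R]$; choosing $R$ of size $\Phi(0)+1+\Vert h\Vert_{S^2}^2$ forces this estimate to be $<R^2/4$, hence $\Vert u\Vert<R$, so $\tau_R=\infty$ and boundedness is proved.

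For the ultimate bound, set $M=\limsup_{t\to\infty}\Phi(t)<\infty$. Given $\eta>0$ pick $T$ so that $\Vert u(t)\Vert\le 2\sqrt{M+\eta}$ for $t\ge T$, and rerun the differential inequality on $[T,\infty)$ with the sharper choice $R=2\sqrt{M+\eta}$ and $\epsilon\sim 1/R$. The exponential factor $\Phi(T)e^{-c\epsilon(t-T)}$ washes out the initial data (this is the ``forgetting'' promised in the introduction), and letting $t\to\infty$ yields
\[
M\le CR(1+\Vert h\Vert_{S^2}^2)=2C\sqrt{M+\eta}\,(1+\Vert h\Vert_{S^2}^2).
\]
Sending $\eta\to 0$ and solving the self-consistency relation $\sqrt M\le 2C(1+\Vert h\Vert_{S^2}^2)$ gives $M\le K(1+\Vert h\Vert_{S^2}^4)$, whence $\limsup(\vert\dot u\vert^2+\Vert u\Vert^2)\le 4M\le K'(1+\Vert h\Vert_{S^2}^4)$. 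The quartic exponent is forced on us by the fact that the effective decay rate $c\epsilon\sim 1/\sqrt M$ in the Gronwall step makes the self-consistency relation quadratic in $(1+\Vert h\Vert_{S^2}^2)$.
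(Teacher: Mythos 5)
Your proof is correct in its essentials, but it takes a genuinely different route from the paper's. You build the classical perturbed Lyapunov functional $E+\epsilon(u,\dot u)$, calibrate $\epsilon\sim 1/R$ against an a priori bound $R$ on $\Vert u\Vert$, and close with Gronwall plus a stopping-time bootstrap; the quartic exponent then comes from the decay rate $c\epsilon\sim M^{-1/2}$ entering the Duhamel term. The paper never forms a single perturbed functional: it keeps the two multipliers $\dot u$ and $u$ separate, picks times $t_n$ with $\Phi(t_n)\ge M-1/n$ and $\Phi(t_n-\tau)\le M+1/n$, integrates the energy identity and the identity for $\frac{d}{dt}(u,\dot u)$ over the fixed window $[t_n-\tau,t_n]$, and combines the resulting average bounds with the oscillation estimate \eqref{diff} to reach $(\tau-C_8)M\le C_{11}(1+\int|h|^2)(1+M^{1/2})$ --- the same self-consistency relation you obtain, except that the factor $M^{1/2}$ arises from bounding $\Vert u\Vert$ by $M^{1/2}$ in \eqref{pot.} rather than from the size of $\epsilon$. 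Your route has the advantage of being self-contained on the boundedness assertion (which the paper only cites from \cite{H2} and \cite{ABH^2}) and of quantifying the exponential forgetting of the initial data; the paper's window technique avoids any coupling between a small parameter and the unknown supremum, needs no pointwise differential inequality restricted to the set $\{t:\Vert u(t)\Vert\le R\}$, and transfers with almost no change to Theorem \ref{power} and the anti-periodic propositions. Two details you should make explicit to be airtight: the absorption of $\epsilon K\Vert u\Vert\langle g(\dot u),\dot u\rangle$ into $\frac14\langle g(\dot u),\dot u\rangle$ uses $\langle g(\dot u),\dot u\rangle\ge -C_1$ from \eqref{condi1}, since monotonicity \eqref{mon} alone does not make this pairing nonnegative; and in the ultimate-bound step the quantity $M$ must be defined from the unperturbed energy before $\epsilon$ is chosen, so that the choice $\epsilon\sim M^{-1/2}$ is not circular.
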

\begin{proof} The boundedness result is known for local damping operators $g$ , cf. the second case of Theorem IV.2.1.1 of \cite{H2} and in the general case it can be proved by adapting in this case the method from \cite {ABH^2}. However even in the local case these results cannot provide a reasonable estimate of the ultimate bound. We start by an estimate in the case of a strong solutions, i.e. we assume $u \in W^{1,1}_{loc}(\R^{+},V)\cap W^{2,1}_{loc}(\R^{+},H)$. The general case will follow by density. 
Let $$E(t) = \frac{1}{2}(\vert \dot{u} \vert^2 + \Vert u \Vert^2)$$ Under the regularity conditions $[u_{0}, v_{0}]\in V \times V$, $\gamma(0, v_{0})\in H$ and $h\in W^{1,1}_{loc}(\mathbb{R^+}, H),$\:
 $ t\rightarrow E(t)$ is absolutely continuous and we have $\forall t\in\mathbb{R^+}$:
\begin{eqnarray}\label{deriv0}
\frac{d}{dt}E(t) = (h,\dot{u}) - \langle g(\dot{u}), \dot{u}\rangle.
\end{eqnarray}

In addition  $t \rightarrow(u(t), \dot{u}(t))$ is absolutely continuous and $$\frac{d}{dt}(u(t), \dot{u}(t)) = \vert \dot{u} \vert^2 - \Vert u \Vert^2 - \langle g(\dot{u}), u  \rangle + (h,u).$$
 By using $\eqref{condi2}$, we obtain:\begin{eqnarray}\label{deriv1}\frac{d}{dt}(u(t), \dot{u}(t)) \leq \vert \dot{u} \vert^2 - \Vert u \Vert^2 +  \Vert u \Vert  \left\{ P\vert h \vert + C_{2} + K \langle g(\dot{u}), \dot{u}\rangle \right\}\end{eqnarray}
with $$P= \sup \{\vert u \vert ,\,\,  u\in V,\,\, \Vert u \Vert = 1\}.$$ Introducing  $$\Phi(t) = 2 E(t), \quad \forall t \geq 0, $$ we are reduced to estimate the upper limit bound for $\Phi(t)$.  Let us introduce 
$$ M = \limsup_{t\rightarrow \infty} \, \Phi(t) $$ and let us consider a sequence of times $t_n$ tending to infinity for which 
$$ \Phi(t_n) \ge M - 1/n $$ In addition for $n$ large enough we have $ t_n\ge \tau$ and
$$ \Phi(t_n-\tau) \le M +1/n$$ where $\tau$ is any fixed positive number to be chosen later. Therefore by integrating \eqref{deriv0} on $[t_n-\tau, t_n]$ we find 
$$ \int_{t_{n}-\tau}^{t_n}\langle g(\dot{u}), \dot{u}\rangle dt \le 1/n + \int_{t_{n}-\tau}^{t_n}(h,\dot{u}\rangle  dt \le 1/n + \frac{\gamma}{2}\int_{t_{n}-\tau}^{t_n} |\dot{u}|^2 dt + \frac{1}{2\gamma}\int_{t_{n}-\tau}^{t_n} |h|^2 dt $$ As a consequence of \eqref{condi1} we deduce 
\begin{equation}\label{diss}\int_{t_{n}-\tau}^{t_n}\langle g(\dot{u}), \dot{u}\rangle dt \le 2/n +  \frac{1}{\gamma}\int_{t_{n}-\tau}^{t_n} |h|^2 dt  + C_3 \end{equation}and \begin{equation}\label{kin} \int_{t_{n}-\tau}^{t_n} |\dot{u}|^2 dt  \le C_4/n +  C_5\int_{t_{n}-\tau}^{t_n} |h|^2 dt  + C_6 \end{equation} which provides an average bound of the kinetic part independent of the initial data and the transient behavior. This is remarkable since we only used the properties $ \Phi(t_n) \ge M - 1/n $ and $ \Phi(t_n-\tau) \le M +1/n $ to express the fact that $t$ is large. By combining these two estimates we also find an estimate of the form \begin{equation}\label{diff}|\Phi(t)- \Phi(s)| \le C_7(1+\int_{t_{n}-\tau}^{t_n} |h|^2 dt ) \end{equation} valid for all $s$, $t$ in $[t_n-\tau, t_n]$.  As a consequence if we had an $L^1$ estimate of the total energy instead of the kinetic part the proof would be completed with exponent $2$ instead of $4$. The difficulty in fact comes from the potential energy. From \eqref{deriv1}, by integrating on $[t_n-\tau, t_n]$ we find $$\int_{t_{n}-\tau}^{t_n}\Vert u \Vert^2 dt \leq  \int_{t_{n}-\tau}^{t_n}\vert \dot{u} \vert^2 dt +  \int_{t_{n}-\tau}^{t_n}[\Vert u \Vert  \left\{ P\vert h \vert + C_{2} + K \langle g(\dot{u}), \dot{u}\rangle \right\}]dt  + |[(u(t), \dot{u}(t))] _{t_n-\tau}^{t_n}|$$ Recalling the notation 
$ M = \limsup_{t\rightarrow \infty} \, \Phi(t) $ we find \begin{equation}\label{pot.}\int_{t_{n}-\tau}^{t_n}\Vert u \Vert^2 dt \leq  \int_{t_{n}-\tau}^{t_n}\vert \dot{u} \vert^2 dt +  M^{1/2} \int_{t_{n}-\tau}^{t_n}[\left\{ P\vert h \vert + C_{2} + K \langle g(\dot{u}), \dot{u}\rangle \right\}]dt  + C_8 M \end{equation} where $C_8 $ does not depend on $\tau$. Combining \eqref{kin} and \eqref{pot.} we obtain \begin{equation}\int_{t_{n}-\tau}^{t_n}\Phi(t)dt \le  C_5\int_{t_{n}-\tau}^{t_n} |h|^2 dt  + C_9 +  M^{1/2} \int_{t_{n}-\tau}^{t_n}[\left\{ P\vert h \vert + C_{2} + K \langle g(\dot{u}), \dot{u}\rangle \right\}]dt  + C_8 M \end{equation} and by \eqref{diss} this implies \begin{equation}\label{averenerg}\int_{t_{n}-\tau}^{t_n}\Phi(t)dt \le  C_{10} (1+ \int_{t_{n}-\tau}^{t_n} |h|^2 dt ) (1 +  M^{1/2})  + C_8 M \end{equation} Finally by combining this last inequality with \eqref{diff} we end up with \begin{equation}\label{final}(\tau -C_8)M \le  C_{11} (1+ \int_{t_{n}-\tau}^{t_n} |h|^2 dt ) (1 +  M^{1/2})  \end{equation} Fixing $\tau\ge 1+C_8$, the result now follows easily  since $$ \int_{t_{n}-\tau}^{t_n} |h|^2 dt\le (1+\tau) ||h||^2_{S^2}$$ The general case of weak solutions follows easily by density relying on Lemma \ref{uniform-density}.\end{proof}

\begin{remark} This ultimate bound has been obtained under the most general known assumption ensuring boundedness of trajectories. It seems not to depend on the kind of damping operator as long as the coerciveness and growth conditions are satisfied. We have absolutely no idea whether it has a chance to be optimal in some cases. A more natural quadratic estimate is valid in many cases as we shall see in the next section. \end{remark}

\section{The case of a power-like damping term} For the main result of this section, we need to introduce an additional Banach space Z such that $$ V\subset Z\subset H $$ with continuous imbeddings. The norm in $Z$ of a vector $z\in Z$ will be denoted by $||z||_Z$.  
\subsection{Main result}
\begin{thm}\label{power}Assume that  $g\in C(V, V')$ satisfies the condition \eqref{mon} and for some $\alpha\ge 0$ we have 
\begin{equation} \label{condi1bis}\exists \gamma>0,\,\,\exists C_{1}\geq0\quad \forall v\in V,
\quad \langle g(v), v \rangle \geq \gamma || v ||_Z^{\alpha +2} - C_{1}.\end{equation}
\begin{equation} \label{condi2bis}\exists K>0,\,\,\exists C_{2}\geq0\quad \forall v\in V,\quad \Vert g(v) \Vert_{V'} \leq  C_{2} + K ||v ||_Z^{\alpha +1}. \end {equation}
Then any solution $u \in C(\R^{+},V)\cap C^{1}(\R^{+},H)$ of $\eqref{eq1}$ is bounded on $\R^{+}$ in the sense that $u$ has bounded range in $V$ and $\dot{u}$ has bounded range in $H$. In addition we have  for some constant $K$ depending only on $A$ and $g$
$$ \limsup_{t\rightarrow \infty} \, (\vert \dot{u}(t) \vert^2 + \Vert u(t) \Vert^2) \le K(1+ ||h||^2 _{S^2 (\mathbb{R^+}, H)})$$ 
\end{thm}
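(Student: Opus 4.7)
The approach adapts the proof of Theorem \ref{theo0}, exploiting the power-like structure through two refinements. Boundedness is already implied by Theorem \ref{theo0} itself, since \eqref{condi1bis}--\eqref{condi2bis} imply \eqref{condi1}--\eqref{condi2} (via $Z\hookrightarrow H$ and $a^{\alpha+1}\leq 1+a^{\alpha+2}$), so only the refined ultimate bound needs to be established; as in Theorem \ref{theo0}, I will work first with strong solutions and conclude by density using Lemma \ref{uniform-density}. Setting $\Phi(t)=|\dot u|^2+\|u\|^2=2E(t)$ and $M=\limsup_t \Phi(t)$, I will pick $t_n\to\infty$ large enough that $\Phi(t_n)\geq M-1/n$ and, using the definition of $\limsup$, $\Phi(t)\leq M+1/n$ for every $t\in[t_n-\tau,t_n]$, with $\tau>0$ a parameter to be fixed later, and write $H_2=\int_{t_n-\tau}^{t_n}|h|^2\,dt$, $\cK=\int|\dot u|^2\,dt$, $\cP=\int\|u\|^2\,dt$.

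The first refinement is in the dissipation estimate. Integrating \eqref{deriv0} over $[t_n-\tau,t_n]$ yields $\int\langle g(\dot u),\dot u\rangle\,dt\leq 2/n+\int(h,\dot u)\,dt$. Rather than using Cauchy--Schwarz on $(h,\dot u)$ as in Theorem \ref{theo0}, I will dominate $|(h,\dot u)|\leq c_Z|h|\,\|\dot u\|_Z$ (where $c_Z$ is the embedding constant of $Z\hookrightarrow H$) and apply Young's inequality with conjugate exponents $\alpha+2$ and $(\alpha+2)/(\alpha+1)$. Combined with \eqref{condi1bis} and the time-H\"older estimate $\int|h|^{(\alpha+2)/(\alpha+1)}\,dt\leq C\,H_2^{(\alpha+2)/(2(\alpha+1))}$, this gives
\[
\int_{t_n-\tau}^{t_n}\|\dot u\|_Z^{\alpha+2}\,dt\ \leq\ C\bigl(1+H_2^{(\alpha+2)/(2(\alpha+1))}\bigr),
\]
a bound with exponent $\leq 1$ on $H_2$ that is strictly sharper than what Cauchy--Schwarz alone provides. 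A further H\"older in time, exploiting the crucial collapse $\tfrac{\alpha+2}{2(\alpha+1)}\cdot\tfrac{\alpha+1}{\alpha+2}=\tfrac12$, then produces the decisive estimate $\int\|\dot u\|_Z^{\alpha+1}\,dt\leq C(1+H_2^{1/2})$. Meanwhile $\cK\leq C(1+H_2)$ (from $|v|_H^2\leq c_Z^2(1+\|v\|_Z^{\alpha+2})$) and the fluctuation bound $|\Phi(t)-\Phi(s)|\leq C(1+H_2)$ on the interval follow exactly as in \eqref{kin}--\eqref{diff}.

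For the potential energy I will integrate the identity $\frac{d}{dt}(u,\dot u)=|\dot u|^2-\|u\|^2+(h,u)-\langle g(\dot u),u\rangle$ over $[t_n-\tau,t_n]$ and use \eqref{condi2bis} to bound $|\langle g(\dot u),u\rangle|\leq(C_2+K\|\dot u\|_Z^{\alpha+1})\|u\|$. The pointwise bound $\|u(t)\|\leq(M+1/n)^{1/2}$ valid on the window turns the cross-term into $KM^{1/2}\!\int\|\dot u\|_Z^{\alpha+1}\,dt\leq CM^{1/2}(1+H_2^{1/2})$; treating $\int(h,u)\,dt$ and $C_2\!\int\|u\|\,dt$ by Cauchy--Schwarz and absorbing the resulting $\cP^{1/2}$ factors via Young will give $\cP\leq C(1+H_2)+PM+CM^{1/2}(1+H_2^{1/2})$, where $PM$ comes from the boundary term $[(u,\dot u)]_{t_n-\tau}^{t_n}$. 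Combining with the lower bound $\int\Phi\geq\tau(M-1/n)-\tau C(1+H_2)$ from the fluctuation estimate and fixing $\tau$ larger than $P+C$, I arrive at $(\tau-P-C)M\leq C(1+H_2)+CM^{1/2}(1+H_2^{1/2})$. One final Young step $CM^{1/2}(1+H_2^{1/2})\leq\tfrac{\tau-P-C}{2}M+C(1+H_2)$ then yields $M\leq C(1+H_2)$, and since $H_2\leq(1+\tau)\|h\|^2_{S^2(\mathbb{R^+},H)}$ the desired bound follows.

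The main obstacle -- and the place where condition \eqref{condi2bis} really earns its keep -- is the sharpened dissipation step: only the Young (rather than Cauchy--Schwarz) estimate of $\int(h,\dot u)\,dt$ produces the exponent $a=(\alpha+2)/(2(\alpha+1))$, whose fortuitous collapse $a\cdot(\alpha+1)/(\alpha+2)=1/2$ makes $\int\|\dot u\|_Z^{\alpha+1}\,dt=O(H_2^{1/2})$; this is precisely the rate that matches $M^{1/2}$ under the final Young absorption and turns the quartic bound of Theorem \ref{theo0} into the claimed quadratic one.
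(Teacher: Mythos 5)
Your proposal is correct and follows essentially the same route as the paper: the same Young-inequality treatment of $(h,\dot u)$ via the $Z$-norm with exponents $\alpha+2$ and $(\alpha+2)/(\alpha+1)$, the same H\"older collapse giving $\int\|\dot u\|_Z^{\alpha+1}\,dt=O(H_2^{1/2})$, the same use of \eqref{condi2bis} to control $\int\|g(\dot u)\|_{V'}\,dt$ in the $(u,\dot u)$ multiplier identity, and the same final absorption of $M^{1/2}$ against $M$ after choosing $\tau$ large. The only (immaterial) differences are that you apply the last Young step at the end rather than at the stage of \eqref{potbis.}, and that you make explicit the reduction of the boundedness claim to Theorem \ref{theo0}, which the paper leaves implicit.
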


\begin{proof} We start as in the proof  of Theorem \ref{theo0}:  by integrating \eqref{deriv0} on $[t_n-\tau, t_n]$ we find 
$$ \int_{t_{n}-\tau}^{t_n}\langle g(\dot{u}), \dot{u}\rangle dt \le 1/n + \int_{t_{n}-\tau}^{t_n}(h,\dot{u}\rangle  dt \le 1/n + \frac{\gamma}{2}\int_{t_{n}-\tau}^{t_n} ||\dot{u}||_Z^{\alpha +2} dt + C(\gamma) \int_{t_{n}-\tau}^{t_n} |h|^{\frac {\alpha +2}{\alpha +1}} dt $$ As a consequence of \eqref{condi1bis} we deduce , since $n\ge 1$, 
\begin{equation}\label{dissbis}\int_{t_{n}-\tau}^{t_n}\langle g(\dot{u}), \dot{u}\rangle dt \le  C(\gamma, \tau) [\int_{t_{n}-\tau}^{t_n} |h|^2 dt] ^{\frac {\alpha +2}{2\alpha +2}} + C_3 \end{equation}and \begin{equation}\label{kinbis} \int_{t_{n}-\tau}^{t_n} |\dot{u}|^2 dt  \le C_4 +  C_5(\gamma, \tau)\int_{t_{n}-\tau}^{t_n} |h|^2 dt \end{equation} From \eqref{dissbis} we also deduce the important new estimate
\begin{equation} \int_{t_{n}-\tau}^{t_n} ||\dot{u}||_Z^{\alpha+1} dt  \le C_6 +  C_7(\gamma, \tau)\{\int_{t_{n}-\tau}^{t_n} |h|^2 dt \}^{1/2}\end{equation} and by \eqref{condi2bis} this implies 
\begin{equation}\label{kinter} \int_{t_{n}-\tau}^{t_n} ||g(\dot{u})||_{V'}dt  \le C_8 +  C_9(\gamma, \tau)\{\int_{t_{n}-\tau}^{t_n} |h|^2 dt \}^{1/2}\end{equation}Recalling the notation 
$ M = \limsup_{t\rightarrow \infty} \, \Phi(t) $ we now find \begin{equation}\int_{t_{n}-\tau}^{t_n}\Vert u \Vert^2 dt \leq  \int_{t_{n}-\tau}^{t_n}\vert \dot{u} \vert^2 dt +  C_9(\gamma, \tau)M^{1/2}\{\int_{t_{n}-\tau}^{t_n} |h|^2 dt \}^{1/2} + C_{10} M + C_{11}\end{equation} where $C_{10 }$ does not depend on $\tau$. Then by using Cauchy-Schwarz 
\begin{equation}\label{potbis.}\int_{t_{n}-\tau}^{t_n}\Vert u \Vert^2 dt \leq  C_{12}(\gamma, \tau)\int_{t_{n}-\tau}^{t_n} |h|^2 dt + (C_{10}+1) M + C_{11}\end{equation} By choosing $\tau$ large enough we obtain, as a consequence of \eqref{potbis.} and \eqref{kinbis}, the inequality 
\begin{equation}\int_{t_{n}-\tau}^{t_n}\Phi(t)dt \le  C_{12}(\gamma)\int_{t_{n}-\tau}^{t_n} |h|^2 dt  + C_{13}\end{equation}
We conclude by using  
\begin{equation}\label{diffbis}|\Phi(t)- \Phi(s)| \le C_{14} (\gamma)(1+\int_{t_{n}-\tau}^{t_n} |h|^2 dt ) \end{equation} which is valid for all $s$, $t$ in $[t_n-\tau, t_n]$ and follows easily from \ref{dissbis} and \ref{kinbis}.
\end{proof}\bigskip \noindent
 
\begin{remark} This result is optimal. For instance if we consider an eigenvector $\varphi$ of $A$ corresponding to the eigenvalue $\lambda>0$, then for each $k>0$, $k \varphi$  is a stationary solution of the equation with source term $h(t) \equiv k \lambda \varphi$  for any dissipative operator $g$. This shows that the ultimate bound of the energy is at least quadratic with respect to the size of the source term.\end{remark} \newpage

\subsection{Examples} 
In this section, $\Omega$ denotes a bounded open domain of $\R^N$with $C^2$  boundary  and $\alpha\ge 0, \,\,c>0.$ We consider four simple special cases 

\noindent {\bf Example 1: The wave equation with local damping }

 \begin{equation}
\left\{\begin{array}{c}
 u_{tt}+c |{u_t}|^{\alpha} u_t - \Delta u = h(t,x),\ \hbox{ in }\R_{+}\times\Omega,\\[2mm]
 {u} = 0   \  \hbox { on } \R_{+}\times\partial\Omega.
\end{array}\right.
\end{equation} Here $V= H_0^1(\Omega) $,  $H= L^2(\Omega)$ and $Z = L^{\alpha+2}(\Omega).$ We assume $ (N-2)\alpha \le 2. $

\medskip

\noindent {\bf Example 2: The wave equation with nonlinear averaged damping }

 \begin{equation}
\left\{\begin{array}{c}
 u_{tt}+c [\int_{\Omega }{u_t}^2(t, x)dx]^{\frac{\alpha}{2}} u_t - \Delta u = h(t,x),\ \hbox{ in }\R_{+}\times\Omega,\\[2mm]
 {u} = 0   \  \hbox { on } \R_{+}\times\partial\Omega.
\end{array}\right.
\end{equation} Here $V= H_0^1(\Omega) $ and  $H= L^2(\Omega)= Z .$

\medskip

\noindent 
{\bf Example 3: A clamped plate equation with nonlinear structural averaged damping} 

\begin{equation}\label{delta2-D}
\left\{
\begin{array}{c}
 u_{tt}-c{[\int_{\Omega}\vert \nabla u_t\vert ^2 dx]}^{\frac{\alpha}{2}} \Delta u_t+\Delta^2 u = h(t, x),\ \hbox{ in }\R_{+}\times\Omega,\\[2mm]
 u=\vert \nabla u\vert = 0   \, \hbox { on } \R_{+}\times\partial\Omega.
\end{array}
\right.
\end{equation} Here $V= H_0^2(\Omega) $, $H= L^2(\Omega) $ and  $Z= H_0^1(\Omega).$

\medskip

\noindent 
{\bf Example 4: A simply supported plate equation with nonlinear structural averaged damping} 

\begin{equation}\label{delta2}
\left\{
\begin{array}{c}
 u_{tt}-c{[\int_{\Omega}\vert \nabla u_t\vert ^2 dx]}^{\frac{\alpha}{2}} \Delta u_t+\Delta^2 u = h(t, x),\ \hbox{ in }\R_{+}\times\Omega,\\[2mm]
 u= \Delta u = 0  , \  \hbox { on } \R_{+}\times\partial\Omega.
\end{array}
\right.
\end{equation} Here  $V= H^2\cap H_0^1(\Omega) $,  $H= L^2(\Omega)$and  $Z= H_0^1(\Omega).$

\medskip \noindent  As a consequence of  Theorem \ref{power} we obtain immediately \begin{corollary}\label{ex} In all the four examples, Let   $h\in S^2 (\mathbb{R^+}, H)$.Then any solution $u \in C(\R^{+},V)\cap C^{1}(\R^{+},H)$ of $\eqref{eq1}$ is bounded on $\R^{+}$ in the sense that $u$ has bounded range in $V$ and $\dot{u}$ has bounded range in $H$. In addition we have  for some constant $K$ independent of $h$ and the initial data $$ \limsup_{t\rightarrow \infty} \, (\vert \dot{u}(t) \vert^2 + \Vert u(t) \Vert^2) \le K(1+ ||h||^2 _{S^2 (\mathbb{R^+}, H)})$$  \end{corollary}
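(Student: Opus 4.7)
The plan is to apply Theorem~\ref{power} directly to each of the four examples, so the entire task reduces to verifying, case by case, the three hypotheses \eqref{mon}, \eqref{condi1bis}, \eqref{condi2bis} for the damping operator $g$ with the stated functional triple $V\subset Z\subset H$.

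Monotonicity is the easiest step: in every case $g$ is the G\^ateaux derivative of a convex, nonnegative functional on $V$, namely $\frac{c}{\alpha+2}\int_\Omega |v|^{\alpha+2}\,dx$ in Example~1, $\frac{c}{\alpha+2}\Vert v\Vert_{L^2}^{\alpha+2}$ in Example~2, and $\frac{c}{\alpha+2}\Vert \nabla v\Vert_{L^2}^{\alpha+2}$ in Examples~3 and~4, so \eqref{mon} is automatic. The coercivity \eqref{condi1bis} then follows from a direct computation of $\langle g(v),v\rangle$, which equals exactly $c\Vert v\Vert_Z^{\alpha+2}$ for the indicated $Z$ (respectively $L^{\alpha+2}(\Omega)$, $L^{2}(\Omega)$, $H_0^1(\Omega)$, $H_0^1(\Omega)$); so \eqref{condi1bis} holds with $\gamma=c$ and $C_1=0$.

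The only nontrivial verification is the growth estimate \eqref{condi2bis}. For Example~1, $g(v)=c|v|^\alpha v$ belongs pointwise to $L^{(\alpha+2)/(\alpha+1)}(\Omega)$ with norm exactly $c\Vert v\Vert_{L^{\alpha+2}}^{\alpha+1}$; the subcritical assumption $(N-2)\alpha\le 2$ yields the Sobolev embedding $H_0^1(\Omega)\hookrightarrow L^{\alpha+2}(\Omega)$, and hence by duality $L^{(\alpha+2)/(\alpha+1)}(\Omega)\hookrightarrow V'=H^{-1}(\Omega)$, which delivers \eqref{condi2bis}. For Example~2, $g(v)$ is a scalar multiple of $v$ and lies in $H=L^2(\Omega)$, so $\Vert g(v)\Vert_{V'}\le C\Vert g(v)\Vert_H = Cc\Vert v\Vert_{L^2}^{\alpha+1}$ is immediate from $H\hookrightarrow V'$. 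For Examples~3 and~4, $g(v)=-c\Vert\nabla v\Vert_{L^2}^\alpha \Delta v$; since $-\Delta\colon H_0^1(\Omega)\to H^{-1}(\Omega)$ is bounded and $H^{-1}(\Omega)\hookrightarrow V'$ under both boundary conditions, we obtain $\Vert g(v)\Vert_{V'}\le C c\Vert\nabla v\Vert_{L^2}^{\alpha+1}$, as needed.

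With all three hypotheses of Theorem~\ref{power} verified in every example, the corollary follows at once. The only place where genuine care is needed is Example~1, where the Sobolev exponent condition $(N-2)\alpha\le 2$ is essential and determines the admissible range of $\alpha$; the averaged-damping examples are essentially scalar perturbations of linear damping, and no critical exponent analysis is required.
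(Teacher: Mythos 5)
Your proof is correct and follows exactly the route the paper intends: the paper simply states that the corollary is an immediate consequence of Theorem~\ref{power}, and you supply the case-by-case verification of \eqref{mon}, \eqref{condi1bis} and \eqref{condi2bis} (via convex potentials, the identity $\langle g(v),v\rangle=c\Vert v\Vert_Z^{\alpha+2}$, and the duality/embedding arguments) that the paper leaves implicit. The details check out, including the observation that $(N-2)\alpha\le 2$ is more than enough for the embedding $H_0^1(\Omega)\hookrightarrow L^{\alpha+2}(\Omega)$ needed in Example~1.
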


\bigskip \noindent
 
\begin{remark} In \cite{ABH^2}, for the 4 previous examples, the authors proved the existence of a unique almost periodic solution when $h$ is an $S^2$- almost periodic  source. In this case (in particular if $h$ is periodic) the ultimate bound coincides with the supremum of the energy of the almost periodic solution. Actually, if we try to estimate directly the periodic solution, some boundary (in time) term disappear but the main part of the estimate is not much simpler. In addition we know that the estimate is essentially optimal, only the multiplicative constants might be worked out if one wants a more precise inequality. \end{remark}

\section{Partial results in the  anti-periodic  case.} If  $g$ is odd and $h$ is $\tau$ -anti periodic, i.e. if we have $$ h(t+\tau) = -h(t) $$ the interesting solutions are the anti-periodic ones, cf.e.g. \cite{H4} for existence results. Since such solutions have mean-value $0$, the solution can be estimated through its time-derivative, and because the estimate of the derivative is generally much better, we can expect an improvement on the energy bound. \medskip 

\noindent This idea is perfectly valid if $H$ is finite-dimensional, since then $u$ and $\dot{u}$ belong to the same space, but otherwise we have a problem to reach the norm of $u$ in $V$. At the present time we do not know what happens if $\dim H = \infty$.  For the time being we can only prove the following partial results. 

\begin{proposition}\label{antip.}Assume that  $V= H$, $h\in C(\R, H)$ is $\tau$ -anti periodic, that $g\in C(H, H)$ satisfies the condition \eqref{mon} and for some $\alpha\ge 0$ we have 

\begin{equation} \label{condi1a}\exists \gamma>0,\,\,\exists C_{1}\geq0\quad \forall v\in H,
\quad \langle g(v), v \rangle \geq \gamma | v |^{\alpha +2} - C_{1}.\end{equation}

\begin{equation} \label{condi2a}\exists K>0,\,\,\exists C_{2}\geq0\quad \forall v\in H,\quad \vert g(v) \vert \leq  C_{2} + K |v |^{\alpha +1}. \end {equation}
Then any $\tau$ -antiperiodic solution $u $ of $\eqref{eq1}$ is such that
$$ \sup_{t\in\R} \, (\vert \dot{u}(t) \vert^2 + \vert u(t) \vert^2) \le C(1+ ||h||^{\frac{2 }{\alpha+1}}_{L^{\infty} (\R, H)})$$ where $C$ is independent of $h$. 
\end{proposition}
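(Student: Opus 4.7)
Since $V=H$ and the data $h,g,A$ are continuous, the equation is an ODE in $H$ and the antiperiodic solution satisfies $u\in C^2(\R,H)$; differentiating the antiperiodicity relation shows that $\dot u$ is also $\tau$-antiperiodic and in particular $u$ is $2\tau$-periodic. The plan is to derive an integrated $L^{\alpha+2}$ bound on $\dot u$, use antiperiodicity to convert it into a pointwise bound on $u$, and finally extract a pointwise bound on $\dot u$ by evaluating the equation at a maximum of $|\dot u|^2$.

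For the first step, integrating the energy identity \eqref{deriv0} over the full period $[0,2\tau]$ makes the boundary $E$-terms cancel by periodicity, leaving $\int_0^{2\tau}\langle g(\dot u),\dot u\rangle\,dt=\int_0^{2\tau}(h,\dot u)\,dt$. Combining the coercivity \eqref{condi1a} with Young's inequality in the form $|h|\,|\dot u|\le \tfrac{\gamma}{2}|\dot u|^{\alpha+2}+C_\gamma |h|^{(\alpha+2)/(\alpha+1)}$ yields $\int_0^{2\tau}|\dot u|^{\alpha+2}\,dt\le C(1+\|h\|_{L^\infty}^{(\alpha+2)/(\alpha+1)})$, and H\"older then gives $\int_0^{2\tau}|\dot u|^2\,dt\le C(1+\|h\|_{L^\infty}^{2/(\alpha+1)})$.

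For the pointwise bound on $u$, antiperiodicity allows one to write $u(t)=\tfrac{1}{2}(u(t)-u(t+\tau))=-\tfrac{1}{2}\int_t^{t+\tau}\dot u(s)\,ds$; Cauchy--Schwarz and the $2\tau$-periodicity of $|\dot u|^2$ give $|u(t)|^2\le \tfrac{\tau}{4}\int_0^{2\tau}|\dot u|^2\,dt\le C(1+\|h\|_{L^\infty}^{2/(\alpha+1)})$.

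The delicate step is the pointwise bound on $\dot u$. Since $\dot u\in C^1(\R,H)$ is $2\tau$-periodic, the continuous map $t\mapsto |\dot u(t)|^2$ attains its maximum at some $t^*\in[0,2\tau]$; differentiability yields $(\ddot u(t^*),\dot u(t^*))=0$, so the equation gives $\langle g(\dot u(t^*)),\dot u(t^*)\rangle=(h(t^*)-Au(t^*),\dot u(t^*))$. Setting $M=|\dot u(t^*)|$ and using \eqref{condi1a} together with the bound on $|u(t^*)|$ from the previous step, we obtain $\gamma M^{\alpha+2}\le C_1+(\|h\|_{L^\infty}+\|A\|\,|u(t^*)|)\,M\le C_1+C(1+\|h\|_{L^\infty})M$, from which $M^{\alpha+1}\le C(1+\|h\|_{L^\infty})$ after separating the trivial case $M\le 1$. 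Hence $|\dot u(t)|^2\le M^2\le C(1+\|h\|_{L^\infty}^{2/(\alpha+1)})$ for every $t$, and together with the bound on $|u(t)|$ this proves the proposition. The main obstacle, and precisely the place where the hypothesis $V=H$ is essential, is this pointwise-maximum trick: it requires $Au\in H$ so that $(Au,\dot u)$ can be majorized by $\|A\|\,|u|\,|\dot u|$, which is unavailable when $V\neq H$ and explains the gap mentioned before the proposition.
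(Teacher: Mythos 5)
Your proof is correct and follows essentially the same three-step strategy as the paper's: an integrated $L^{\alpha+2}$ bound on $\dot u$ from the energy identity over a full period, a pointwise bound on $u$ from anti-periodicity, and a critical-point argument for the pointwise bound on $\dot u$. The only (harmless) difference is in the last step: the paper maximizes the full energy $\Phi=|\dot u|^2+|A^{1/2}u|^2$, whose derivative is exactly $2[(h,\dot u)-\langle g(\dot u),\dot u\rangle]$ so that the $Au$ term never appears at the critical point, whereas you maximize $|\dot u|^2$ and absorb $(Au(t^*),\dot u(t^*))$ using the boundedness of $A$ (i.e.\ $V=H$) together with the already-established bound on $\sup_t|u(t)|$ --- both routes use $V=H$ in an essential way and yield the same exponent.
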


\begin{proof} The starting point is the same as for the proof of Theorem \ref{power}. From the inequality \begin{equation}\int_{0}^{2\tau} \langle g(\dot{u}), \dot{u}\rangle dt \le  C(\gamma, \tau) [\int_{0}^{2\tau}  |h|^2 dt] ^{\frac {\alpha +2}{2\alpha +2}} + C_3 \end{equation}we deduce the more precise estimate  \begin{equation}\label{kinanti} \int_{0}^{2\tau}  |\dot{u}|^2 dt  \le C_4 +  C_5(\gamma, \tau)[\int_{0}^{2\tau}  |h|^2 dt]^{\frac {1}{\alpha +1}} \end{equation} which implies, since $u$ has mean-value $0$, 
\begin{equation}\label{supu} \sup_{t\in [0, 2\tau]} \,  \vert u(t) \vert^2 \le C(1+ ||h||^{\frac{2 }{\alpha+2}}_{L^{\infty} (\R, H)})\end{equation} To obtain the uniform bound on $\dot{u}$, the trick now consists in evaluating the maximum of 
$ \Phi(t) = |\dot{u}|^2 + |A^{1/2}u |^2 . $ At a maximum point $ \theta$ the derivative vanishes, which gives 
$$  \langle g(\dot{u}), \dot{u}\rangle = (h,\dot{u}) $$ hence 
$$ |\dot{u}(\theta)|^2 \le C' (1+ |h|^{\frac {2}{\alpha +1}}) $$ This implies $$ \max_{t\in [0, 2\tau]}\Phi (t) =   \Phi(\theta) \le C'' (1+ ||h||^{\frac {2}{\alpha +1}}_{L^{\infty} (\R, H)})) $$ and the conclusion follows immediately. 
\end{proof}
\begin{remark} This result is optimal. For instance if we consider an eigenvector $\varphi$ of $A$ corresponding to the eigenvalue $\lambda>0$, then for each $k>0$, $u_k (t) = k \cos(\lambda^{1/2} t)\varphi$  is a solution of the equation 
$$ \ddot{u}+ Au + g(\dot{u} ) = g(- k\lambda^{1/2} \sin(\lambda^{1/2} t)\varphi) =: h(t) $$  and the $L^{\infty} $ norm of the source term is less than a constant times $k^{\alpha+1} $  for $k$ large. Both $u$ and $h$ are anti-periodic. \end{remark} 

We have a weaker  result  (intermediate between Theorem \ref{power}  and Proposition \ref{antip.}) which is also valid in the infinite dimensional setting and can be stated as follows: 
\begin{proposition}\label{antip2.} Assume that the conditions of Theorem \ref{power} are satisfied with \eqref{condi2bis} reinforced into \begin{equation} \label{condi2ter}\exists K>0,\,\,\exists C_{2}\geq0\quad \forall v\in V,\quad \Vert g(v) \Vert_{Z'} \leq  C_{2} + K ||v ||_Z^{\alpha +1}. \end {equation} 
Then any $\tau$ -antiperiodic solution $u \in C^1(\R, V)\cap C^2(\R, H)$ of $\eqref{eq1}$ is such that
$$ \sup_{t\in\R} \, (\vert \dot{u}(t) \vert^2 + \Vert u(t) \Vert^2) \le C(1+ ||h||^{\frac{\alpha +2 }{\alpha+1}}_{L^{2} ([0, \tau], H)})$$ where $C$ is independent of $h$. 
\end{proposition}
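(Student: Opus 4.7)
The plan is to adapt the strategy of Proposition \ref{antip.}: first derive an $L^{\alpha+2}$-in-time bound on $\dot u$ valued in $Z$ from periodicity of the energy, then exploit antiperiodicity to control $u$ in $Z$, and finally estimate the maximum of $\Phi(t) = |\dot u(t)|^2 + \|u(t)\|^2$. The genuine difficulty in infinite dimensions is that a bound on $\int \|\dot u\|_Z^{\alpha+2}$ only yields control of the $Z$-norm of $u$ via the antiperiodic representation $u(t) = \tfrac{1}{2}\int_{t-\tau}^{t}\dot u(s)\,ds$, not of the $V$-norm that enters the energy. The strengthened hypothesis \eqref{condi2ter} is precisely what allows one to recover the $V$-bound by testing the equation against $u$ in the $Z$-duality.

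Since $u$ is $\tau$-antiperiodic, $\Phi$ is $\tau$-periodic. Integrating $\tfrac{1}{2}\Phi'(t) = (h,\dot u) - \langle g(\dot u), \dot u\rangle$ over $[0,\tau]$, combining with \eqref{condi1bis}, and applying Young with conjugate exponents $(\alpha+2)/(\alpha+1)$ and $\alpha+2$ after using $|\dot u|_H \le C\|\dot u\|_Z$ yields
\[
\int_0^\tau \|\dot u\|_Z^{\alpha+2}\,dt \le C\bigl(1+\|h\|_{L^2([0,\tau],H)}^{(\alpha+2)/(\alpha+1)}\bigr).
\]
Together with the antiperiodic representation and Hölder in time, this immediately gives $\sup_t \|u(t)\|_Z \le C(1+\|h\|_{L^2}^{1/(\alpha+1)})$.

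To control $\int_0^\tau\|u\|^2\,dt$ I test the equation against $u$: since $\ddot u, h \in H$ and $g(\dot u)\in Z'$ by \eqref{condi2ter}, the pairing $\langle\ddot u + Au + g(\dot u) - h,u\rangle$ is justified in $\langle V',V\rangle$. Using $\tfrac{d}{dt}(\dot u,u)_H = |\dot u|^2 + (\ddot u,u)_H$ and the $\tau$-periodicity of $(\dot u, u)_H$ one gets
\[
\int_0^\tau\|u\|^2\,dt = \int_0^\tau |\dot u|^2\,dt + \int_0^\tau(h,u)_H\,dt - \int_0^\tau\langle g(\dot u),u\rangle_{Z',Z}\,dt.
\]
Each right-hand term is $\le C(1+\|h\|_{L^2}^{(\alpha+2)/(\alpha+1)})$: the first by $Z\hookrightarrow H$ and Hölder, the second by Cauchy--Schwarz together with the uniform $Z$-bound on $u$, and the third by combining $\|g(\dot u)\|_{Z'}\le C(1+\|\dot u\|_Z^{\alpha+1})$ with the same $Z$-bound on $u$ --- this is where \eqref{condi2ter} is indispensable.

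Putting everything together, $\int_0^\tau\Phi(t)\,dt\le C(1+\|h\|_{L^2}^{(\alpha+2)/(\alpha+1)})$, so by continuity and periodicity $\min_t\Phi(t)\le\tau^{-1}\int_0^\tau\Phi\,dt$ obeys the same bound. Finally, from $\Phi'=2[(h,\dot u)-\langle g(\dot u),\dot u\rangle]$ and the estimates already obtained (using $\langle g(v),v\rangle\ge -C_1$ to control the negative part of the second integrand), the oscillation $\mathrm{osc}_{[0,\tau]}\Phi$ is bounded by the same quantity, hence $\max\Phi \le \min\Phi + \mathrm{osc}_{[0,\tau]}\Phi \le C(1+\|h\|_{L^2}^{(\alpha+2)/(\alpha+1)})$. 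The main obstacle throughout is the passage from the $Z$-valued bound on $\dot u$ to a $V$-valued bound on $u$, achievable only because \eqref{condi2ter} places $g(\dot u)$ in $Z'$ rather than merely $V'$.
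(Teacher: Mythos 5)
Your proposal is correct and follows essentially the same route as the paper's own proof: integrate the energy identity over a period to get the dissipation bound and hence the $L^{\alpha+2}$-in-time $Z$-bound on $\dot u$, use the antiperiodic representation of $u$ to bound $\sup_t\Vert u(t)\Vert_Z$, test the equation against $u$ and invoke \eqref{condi2ter} to control $\int\langle g(\dot u),u\rangle$ and hence $\int\Phi$, and finally pass from the mean of $\Phi$ to its supremum via the one-sided bound on $\Phi'$. The only differences are cosmetic (working on $[0,\tau]$ with the $\tau$-periodicity of $\Phi$ rather than on $[0,2\tau]$, and phrasing the last step as min plus oscillation).
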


\begin{proof} The starting point is the same as for the proof of Theorem \ref{power}. From the inequality \begin{equation}\int_{0}^{2\tau} \langle g(\dot{u}), \dot{u}\rangle dt \le  C_{3} [ 1+ \int_{0}^{2\tau}  |h|^2 dt] ^{\frac {\alpha +2}{2\alpha +2}}] \end{equation}we deduce the  estimate  \begin{equation}\label{kinanti2} \int_{0}^{2\tau}  |\dot{u}|^2 dt  \le C_4 [ 1+  \{\int_{0}^{2\tau}  |h|^2 dt\}^{\frac {1}{\alpha +1}}] \end{equation} but also 
\begin{equation}\label{Zest} \int_{0}^{2\tau} ||\dot{u}||_Z^{\alpha+1} dt  \le C_5 [ 1+ \{\int_{0}^{2\tau} |h|^2 dt \}^{1/2}]\end{equation} and by \eqref{condi2ter} this implies 
\begin{equation}\label{kinter} \int_{0}^{2\tau} ||g(\dot{u})||_{Z'}dt  \le C_6 [  1+ \{\int_{0}^{2\tau} |h|^2 dt \}^{1/2}]\end{equation}
From \eqref{Zest}, since $u$ has mean-value $0$, we deduce 
\begin{equation}\label{supu} \sup_{t\in [0, 2\tau]} \,  \Vert u(t) \Vert _Z \le C_7(1+ ||h||^{\frac{1 }{\alpha+1}}_{L^{2} ([0,2\tau],  H)})\end{equation} The two last inequalities imply immediately $$ \vert \int_{0}^{2\tau} \langle g(\dot{u}), u\rangle dt  \vert \le C_8 (1+ ||h||^{\frac{\alpha+2 }{\alpha+1}}_{L^{2} ([0,2\tau],  H)})$$  Now, multiplying the equation by $u$ and integrating on the period we find easily after combining with \eqref{kinanti2}
$$  \int_{0}^{2\tau} \Phi(t) dt   \le C_9 (1+ ||h||^{\frac{\alpha+2 }{\alpha+1}}_{L^{2} ([0,2\tau],  H)})$$ with $\Phi(t) = \vert \dot{u} \vert^2 + \Vert u \Vert^2 $ . Since 
$$ \Phi '(t) = (h, \dot{u}) - \langle g( \dot{u}), \dot{u}\rangle $$  by $2\tau$- periodicity and the inequality  $ \Phi'\le \vert h\vert \vert \dot{u}\vert + C_1$, we find  as a consequence of \eqref{kinanti2} $$ \Phi(t) \le \int _{t-\tau}^t \Phi(s) ds +  C_{10} (1+ ||h||^{\frac{\alpha+2 }{\alpha+1}}_{L^{2} ([0,2\tau],  H)})$$  and the conclusion follows easily by using $\tau-$ antiperiodicity. 
\end{proof}
\begin{remark} This result is certainly not optimal but it is all we can prove for the moment even in the most basic examples. Our result  requires additional regularity on $u$, this is usually achieved by assuming some regularity on $h$. When $g$ is monotone, usually the anti-periodic solution is unique and depends continuously on $h$ in $L^2$, so that the estimate will be easy to transfer to the general case in the examples. This is important since we cannot derive strong estimates on solutions which are not anti-periodic and therefore approximation by strong solutions has to be performed within the anti-periodic class. \end{remark}

\end{document}